%
%
%
%
%
%
%
%

\documentclass[smallextended,numbook,runningheads]{svjour3}     

\smartqed  
\usepackage{graphicx}
\usepackage{amsmath}
\usepackage{amsfonts}
\usepackage{epstopdf} 
\usepackage{epsfig}
\usepackage{bm}
\usepackage{enumitem}
\usepackage{ifpdf}
\usepackage{url}
\usepackage{tabularx}
\usepackage{mathptmx}      

\begin{document}

\title{Efficient high-order two-derivative DIRK methods with optimized phase errors
\thanks{J.O.~Ehigie was supported by the 2024 Tertiary Education Task Fund (TETFUND) National Research Fund (NRF) under grant TETF/ES/DR\&D-CE/NRF2024/CC/EHU/00072 and a grant under IMU-Simons African Fellowship Program which enabled him complete this research at Mississippi State University, USA. V.T.~Luan was supported in part by the National Science Foundation under Grant DMS–2531805.
}}


\author{Julius O. Ehigie         \and
        Vu Thai Luan 
}


\institute{Julius O. Ehigie \at
              Department of Mathematics, University of Lagos, 23401, Nigeria. \\
              \email{jehigie@unilag.edu.ng}           
           \and
           Vu Thai Luan \at
              Department of Mathematics and Statistics, Texas Tech University, 1108 Memorial Circle, Lubbock, TX 79409, USA.\\
              \email{vu.luan@ttu.edu}
}


\maketitle

\begin{abstract}
This work constructs and analyzes new efficient high-order two-derivative diagonally implicit Runge--Kutta (TDDIRK) schemes with optimized phase errors. Specifically, we present a convergence result for TDDIRK methods and investigate their optimized phase errors and linear stability analysis. Based on these, we derive new families of 2-stage fourth-order, 2-stage fifth-order, and 3-stage fifth-order TDDIRK schemes. Finally, we provide numerical experiments at both the ODE and PDE levels to demonstrate the accuracy and efficiency of these new schemes compared to known DIRK schemes in the literature.
\keywords{Two-derivative Runge--Kutta methods \and DIRK methods \and Oscillatory differential equations \and Optimization \and Phase errors}
\subclass{MSC 65L05 \and MSC 65M20}
\end{abstract}

\section{Introduction}\label{sec1}

In this paper, we are concerned with the construction, analysis, and derivation of efficient high-order TDDIRK methods with optimized phase errors for the time integration of PDEs or evolution equations, which, upon spatial discretization, can be formulated as initial value problem (IVP):
\begin{equation}\label{IVP}
y'(t) = f(y(t)), \quad y(t_0)=y_0.
\end{equation}
Here, the solution $y\in \mathbb{R}^N$ is assumed to be sufficiently smooth and  $f: \mathbb{R}^N\rightarrow \mathbb{R}^N$ is sufficiently differentiable and satisfies a local Lipschitz condition along the exact solution. All occurring derivatives are assumed to be uniformly bounded.
By introducing $g(y): = f'(y)f(y) = y''(t)$, the formulation of an $s$-stage TDDIRK method (see e.g., \cite{ChanTsai2010}) is given as follows
\begin{subequations}\label{TDDIRK}
\begin{align}
&Y_i=y_n+c_ihf(y_n)+h^2\sum\limits_{j=1}^ia_{ij}g(Y_j), \qquad i=1,2,\ldots,s \label{InternalTDDIRK}\\
&y_{n+1}=y_n+hf(y_n)+h^2\sum\limits_{i=1}^sb_ig(Y_i)  \label{UpdateTDDIRK}
\end{align}
\end{subequations}
where, $h$ is the time constant stepsize and $a_{ij}$, $b_{i}$ and $c_i$ are the unknown coefficients.
Such schemes belong to the broader class of RK methods, which are among the most widely used integrators for solving \eqref{IVP} due to their simplicity, flexibility, and ease of implementation \cite{Butcher2016,HairerWanner1996,KennedyCarpenter2016}.

For stiff or oscillatory systems, diagonally implicit Runge–Kutta (DIRK) schemes have been particularly attractive since they offer enlarged stability regions while keeping the stage-by-stage computational cost manageable \cite{AlRabeh1993,Alexander1973,BoomZingg2018,KennedyCarpenter2016,SkvortsovKozlov2014}. As a result, DIRK methods remain an active area of research, especially for applications involving large time steps and long-time integration.

In recent years, there has been growing interest in two-derivative Runge–Kutta (TDRK) methods (see, e.g., \cite{Kastlunger_Wanner1972,goeken2000runge}), where both the first and second time derivatives of the solution are incorporated into the scheme \cite{ChanTsai2010,Kalogiratou2013,MonovasilisKalogiratou2021,RanochaDalcinParsaniKetcheson2022}. By exploiting additional derivative information, TDRK schemes are able to achieve higher accuracy and improved dispersion/dissipation properties compared to classical RK methods. High-order explicit TDRK methods up to order seven were analyzed in \cite{ChanTsai2010}, while optimized versions with reduced phase errors have been investigated in several subsequent works \cite{Lele1992,NajafiYazdiMongeau2013,NazariMohammadianZadraCharron2013}.

The combination of these two directions (implicit and multi-derivative formulations) has led to the development of two-derivative DIRK (TDDIRK) schemes \eqref{TDDIRK}. Early constructions of such methods were given in \cite{AhmadSenuIbrahimOthman2019a,KennedyCarpenter2016}, and more recent advances include exponentially fitted TDDIRK and exponential methods for oscillatory problems \cite{EhigieLuanOkunugaYou2022,Luan2024a}, as well as optimized low-dispersion and low-dissipation formulations \cite{CasperShuAtkins1994,Krivovichev2020,RanochaDalcinParsaniKetcheson2022}. These methods extend the advantages of DIRK schemes to the multi-derivative setting, enabling stable and accurate integration of stiff and oscillatory systems.

To our best of knowledge, relatively few works have focused on optimizing phase properties of TDDIRK methods. For oscillatory problems, however, controlling phase errors is essential to ensure accurate long-time integration \cite{HouwenSommeijer1987,Lele1992,TamWebb1993}. While optimized explicit TDRK schemes have been proposed (see, e.g.,\cite{Kalogiratou2013,Krivovichev2020,MonovasilisKalogiratou2021}), their applicability is limited by stringent stability restrictions. Existing implicit approaches either do not systematically address phase accuracy or only treat specific classes such as exponentially fitted methods, where the choice of frequency parameter remains a delicate issue \cite{EhigieLuanOkunugaYou2022,RamosVigoAguiar2010}.

Motivated by these observations,  we present in this paper new families of high-order TDDIRK schemes with optimized phase errors. Specifically, we construct 2-stage fourth-order, 2-stage fifth-order, and 3-stage fifth-order TDDIRK schemes by formulating and solving order conditions under additional optimization criteria targeting phase accuracy and reduced local truncation error. Moreover, a convergence analysis and linear stability study are carried out to confirm their theoretical properties. Finally, numerical experiments on both ODE and PDE test problems demonstrate that the proposed schemes significantly improve accuracy and efficiency compared to existing DIRK and TDDIRK schemes.

The remainder of the paper is organized as follows.
In Section~\ref{Sec_TDDIRK}, we recall the order conditions for TDDIRK methods and provide a convergence analysis . Section~3 develops the optimization framework for controlling phase errors and presents the construction of new families of 2-stage fourth-order and fifth-order and 3-stage fifth-order schemes. Section~\ref{SectionStability} provides the linear stability analysis of the newly constructed schemes. Section~\ref{Sec_experiments} reports numerical experiments that demonstrate the effectiveness of the proposed schemes. Finally, Section~\ref{conclu} concludes the paper with some remarks.

\section{Order conditions and convergence analysis for TDDIRK methods}\label{Sec_TDDIRK}

In this section, we first recall the order conditions for two-derivative DIRK methods. Unlike previous works that focus only on local error analysis (consistency), we provide a global error analysis for these methods.

\subsection{Order conditions}
Let $\hat{e}_{n+1} = \hat{y}_{n+1} - y(t_{n+1})$ denote the local error of the two-derivative DIRK methods \eqref{TDDIRK} at time $t = t_{n+1}$. This error, evaluated in the Euclidean norm $\| \cdot \|$, represents the difference between the true solution $y(t_{n+1})$ and the numerical solution $\hat{y}_{n+1}$, obtained by performing one-step integration using \eqref{TDDIRK}, starting from $y(t_n) =: \tilde{y}_n$,
\begin{subequations}\label{CTDDIRK}
\begin{align}
&\hat{Y}_i=\tilde{y}_n+c_ihf(\tilde{y}_n)+h^2\sum\limits_{j=1}^ia_{ij}g(\hat{Y}_j), \qquad i=1,2,\ldots,s \label{CInternalTDDIRK}\\
&\hat{y}_{n+1}=\tilde{y}_n+hf(\tilde{y}_n)+h^2\sum\limits_{i=1}^sb_ig(\hat{Y}_i).  \label{CUpdateTDDIRK}
\end{align}
\end{subequations}
Following \cite{ChanTsai2010}, the order conditions for \eqref{CTDDIRK} can be derived under  the simplifying assumption
\begin{equation}\label{rowassumption}
\sum\limits_{j=1}^s a_{ij}=\dfrac{c_i^2}{2}, \qquad i=1,2,\ldots,s
\end{equation}
For later use,  we display the order conditions for methods up to order six in Table~\ref{TabOrderConditions}. Note that  $\hat{e}_{n+1} = \mathcal{O}(h^{p+1})$ for methods satisfying the order conditions up to order $p$.

\setlength{\extrarowheight}{1.4 pt}
\renewcommand{\arraystretch}{1.35}
\begin{table}[ht!]
\centering
\caption{Order conditions for TDDIRK methods \eqref{TDDIRK} up to order 6. }
\begin{tabular}{|c|l|c|}
\hline
Order & Order condition(s) & No. \\[2ex]
\hline
2&$\sum\limits_{i=1}^sb_i=\tfrac{1}{2}$ & 1\\[2ex]
\hline
3&$\sum\limits_{i=1}^sb_ic_i= \tfrac{1}{6}$ & 2\\[2ex]
\hline
4&$\sum\limits_{i=1}^sb_ic_i^2=\tfrac{1}{12}$ & 3 \\[2ex]
\hline
5&$\sum\limits_{i,j=1}^sb_ia_{ij}c_j=\tfrac{1}{120}$ & 4 \\[2ex]
&$\sum\limits_{i=1}^sb_ic_i^3=\tfrac{1}{20}$ & 5\\[2ex]
\hline
6&$\sum\limits_{i,j=1}^sb_ia_{ij}c_j^2 =\tfrac{1}{360}$ & 6\\[2ex]
&$\sum\limits_{i,j=1}^sb_ic_ia_{ij}c_j=\tfrac{1}{180}$ & 7 \\[2ex]
&$\sum\limits_{i=1}^sb_ic_i^4=\tfrac{1}{30}$ & 8\\[2ex]
\hline
\end{tabular}\label{TabOrderConditions}
\end{table}

\subsection{Global error analysis}

For the convergence analysis of high-order TDDIRK methods, we rely on the reasonable assumptions on $y(t)$ and $f(y)$ stated in Section 1 to  prove the following lemma first.

 \begin{lemma}\label{lemma2.1}
 Under the assumptions that $y(t) \in \mathbb{R}^N$ is sufficiently smooth and  that  $f(y): \mathbb{R}^N\rightarrow \mathbb{R}^N$ is sufficiently differentiable with bounded derivatives and satisfies a local Lipschitz condition,
the function $g(y)= f'(y)f(y):  \mathbb{R}^N \to  \mathbb{R}^N$ is also locally Lipschitz and has bounded derivatives in $ \mathbb{R}^N$. Consequently, there exists a constant $L_g$ such that
\begin{equation}
\|g(y)-g(z)\| \leq L_g \|y - z \|
\end{equation}
for all $y, \ z \in  \mathbb{R}^N $.
 \end{lemma}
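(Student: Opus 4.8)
The plan is to exploit the product structure $g(y)=f'(y)\,f(y)$, where $f'$ denotes the Jacobian of $f$, and to reduce everything to the standing bounds on $f$ and its derivatives. The central algebraic device is to telescope the difference $g(y)-g(z)$ through the cross term $f'(y)f(z)$:
\begin{equation*}
g(y)-g(z)=f'(y)\bigl(f(y)-f(z)\bigr)+\bigl(f'(y)-f'(z)\bigr)f(z).
\end{equation*}
Taking the Euclidean norm (and the induced operator norm for the Jacobian factors) and applying the triangle inequality then reduces the Lipschitz claim to bounding the two summands separately.

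First I would bound the four ingredients appearing on the right. The factor $\|f'(y)\|$ is controlled by the assumed bound on the first derivatives of $f$, say $\|f'(\cdot)\|\le M_1$; the difference $\|f(y)-f(z)\|$ is controlled by the local Lipschitz condition on $f$, giving $\le L_f\|y-z\|$. For the second summand, the Lipschitz continuity of the Jacobian, $\|f'(y)-f'(z)\|\le L_{f'}\|y-z\|$, follows from the assumed boundedness of the second derivatives via the mean value inequality, while $\|f(z)\|\le M_0$ on the relevant region. Collecting these yields
\begin{equation*}
\|g(y)-g(z)\|\le \bigl(M_1 L_f + L_{f'} M_0\bigr)\,\|y-z\|,
\end{equation*}
so that $L_g:=M_1 L_f + L_{f'} M_0$ is the desired Lipschitz constant.

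For the companion claim that $g$ has bounded derivatives, I would differentiate componentwise using the product and chain rules, $\partial_l g_i = \sum_k (\partial_l\partial_k f_i)\,f_k + \sum_k (\partial_k f_i)(\partial_l f_k)$, i.e.\ $g' = (f''\!:\!f) + (f')^2$ schematically. Each entry is a finite sum of products of derivatives of $f$ of order at most two with $f$ itself; by the standing boundedness assumptions these are all bounded, and an easy induction gives the same for higher derivatives of $g$, which merely involve derivatives of $f$ of one order higher. (Alternatively, once $g'$ is bounded, the Lipschitz estimate also follows at once from the mean value inequality, providing a second route to the same conclusion.)

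The one point requiring care --- and the main potential obstacle --- is the appearance of $\|f(z)\|$ (and of $f$ inside $g'$), since the hypotheses bound the \emph{derivatives} of $f$ but not $f$ itself. This is resolved by the word \emph{local}: because we only assert a local Lipschitz property and work along the exact solution as stated in Section~\ref{sec1}, we may restrict to a compact neighborhood of the solution trajectory, on which the continuous function $f$ is automatically bounded. All constants $M_0,M_1,L_f,L_{f'}$ may then be taken uniform on that set, which is precisely what the global error analysis in the sequel requires.
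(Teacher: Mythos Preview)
Your proof is correct and follows essentially the same route as the paper: both telescope $g(y)-g(z)$ through a cross term and bound the two summands using the Lipschitz constants of $f$ and $f'$ together with the boundedness of $f$ and $f'$. Your version is in fact more complete, since you give an explicit formula for $L_g$, actually address the ``bounded derivatives of $g$'' clause, and flag the subtlety that $\|f\|$ must be bounded on the relevant region---points the paper's proof glosses over.
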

\begin{proof}
The proof is straightforward. Under the given assumptions, it follows that the Jacobian  $f'(y)$ is also locally Lipschitz since
\[
\| f'(y) - f'(z) \| = \| \int_0^1 f''(y + \theta (z - y)) (z - y) \, d\theta \| \leq L_{f'}  \|y - z \|.
\]
Therefore,
\begin{equation*}
\begin{array}{rl}
\|g(y)-g(z)\| &=\|f'(y)f(y)-f'(z)f(z)\|\ =\|(f'(y) - f'(z))f(y) + f'(z)(f(y) - f(z))\| \\[1.5ex]
&\leq L_g \|y - z \|,
\end{array}
\end{equation*}
where the constant $L_g$ depends on the Lipschitz constants $L_{f}$, $ L_{f'}$ of $f(y)$ and $f'(y)$, respectively, as well as their boundedness.
\qed
\end{proof}

Let $e_{n+1}=y_{n+1 }- y(t_{n+1})$ denote the global error, i.e., the difference between the numerical and the exact solution of \eqref{IVP} obtained by the two-derivative DIRK scheme  \eqref{TDDIRK}.

Using Lemma~\ref{lemma2.1}, we establish the following convergence result for TDDIRK methods.

\begin{theorem}\label{theorem2.1}
Under the given assumptions in Lemma~\ref{lemma2.1}, a two-derivative DIRK method \eqref{TDDIRK} applied to  \eqref{IVP}, which fulfills the order conditions of Table~\ref{TabOrderConditions} up to order $p$ ($2 \le p \le 6$),  is convergent of order~$p$ for $h<\dfrac{1}{\max_{1\leq j\leq i}\sqrt{L_g|a_{jj}|}}$ . Namely,  the numerical solution $u_n$ satisfies the global error bound
\begin{equation}\label{eq3.5}
\|e_n\| = \| y_n - y(t_n)\|\leq C h^p
\end{equation}
uniformly on compact time intervals \ $0 \leq  t_n  = nh \leq  T$ with a constant $C$ that depends on $ t_{\text{end}} -t_0$ and the Lipschitz constants of $f(y)$ and $g(y)$.
\end{theorem}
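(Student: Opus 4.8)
The plan is to follow the classical convergence framework for one-step methods: pair a local (consistency) error bound with a one-step stability estimate that is Lipschitz in the initial data, and then close the argument with a discrete Gronwall inequality. The consistency part is already in hand, since the excerpt notes that whenever the order conditions of Table~\ref{TabOrderConditions} hold up to order $p$ one has $\|\hat{e}_{n+1}\| \le C_1 h^{p+1}$. The real work is the stability estimate, which must absorb the diagonally implicit structure together with the $h^2$-scaled second-derivative terms.

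First I would decompose the global error by inserting the reference value $\hat{y}_{n+1}$, namely the exact one-step propagation from $y(t_n)$ governed by \eqref{CTDDIRK}:
\[
\|e_{n+1}\| \le \|y_{n+1} - \hat{y}_{n+1}\| + \|\hat{e}_{n+1}\|.
\]
The second term is $\mathcal{O}(h^{p+1})$; for the first I would control the stage differences $\delta_i := \|Y_i - \hat{Y}_i\|$, where $Y_i$ and $\hat{Y}_i$ are the stages started from $y_n$ and from $y(t_n)$, respectively.

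Next, subtracting the stage equations \eqref{InternalTDDIRK} and \eqref{CInternalTDDIRK} and applying the Lipschitz bounds $\|f(y_n)-f(y(t_n))\|\le L_f\|e_n\|$ together with $\|g(Y_j)-g(\hat{Y}_j)\|\le L_g\delta_j$ from Lemma~\ref{lemma2.1} yields
\[
\delta_i \le (1 + c_i h L_f)\|e_n\| + h^2 L_g \sum_{j=1}^{i} |a_{ij}|\,\delta_j .
\]
Isolating the diagonal term gives $(1 - h^2 L_g|a_{ii}|)\,\delta_i \le (1+c_ihL_f)\|e_n\| + h^2 L_g\sum_{j<i}|a_{ij}|\,\delta_j$, and the step-size restriction $h < 1/\max_j\sqrt{L_g|a_{jj}|}$ is exactly what makes $1 - h^2 L_g|a_{ii}| > 0$, so that this relation can be solved for $\delta_i$. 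I would then induct on $i$ through the lower-triangular coupling: since every contribution from stages $j<i$ carries an $h^2$ prefactor, each $\delta_i$ is bounded by $(1+\mathcal{O}(h))\|e_n\|$. This induction is the main obstacle, as it is the only place where the diagonal implicitness and the second-derivative scaling genuinely interact and force the stated bound on $h$.

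Finally, inserting these stage bounds into the difference of the update equations \eqref{UpdateTDDIRK} and \eqref{CUpdateTDDIRK} produces a one-step estimate of the form $\|y_{n+1}-\hat{y}_{n+1}\| \le (1+h\Lambda)\|e_n\|$, where $\Lambda$ depends only on $L_f$, $L_g$, and the method coefficients and stays bounded for admissible $h$. Combining with the consistency bound gives the recursion $\|e_{n+1}\| \le (1+h\Lambda)\|e_n\| + C_1 h^{p+1}$, and with $e_0=0$ the discrete Gronwall lemma yields
\[
\|e_n\| \le C_1 h^{p+1}\,\frac{(1+h\Lambda)^n - 1}{h\Lambda} \le \frac{C_1\bigl(e^{\Lambda T}-1\bigr)}{\Lambda}\,h^p =: C h^p,
\]
using $(1+h\Lambda)^n \le e^{\Lambda n h} = e^{\Lambda t_n} \le e^{\Lambda T}$ on $0\le t_n\le T$. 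This delivers the asserted order-$p$ bound \eqref{eq3.5} with $C$ depending on $T-t_0$ and the Lipschitz constants of $f$ and $g$.
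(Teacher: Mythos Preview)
Your proposal is correct and follows essentially the same route as the paper: the same splitting $e_{n+1}=(y_{n+1}-\hat y_{n+1})+\hat e_{n+1}$, the same Lipschitz-based stage bound with induction through the lower-triangular coupling (where the restriction on $h$ ensures $1-h^2L_g|a_{ii}|>0$), and the same one-step recursion closed by a discrete Gronwall argument. Aside from notation ($\delta_i$ versus the paper's $E_i$, $\Lambda$ versus $M_s$) and a slightly more explicit final Gronwall computation on your side, the arguments coincide.
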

\begin{proof}
It is clear from the definitions of $e_{n}$ and $\hat{e}_{n}$ above that the following relation holds
\begin{equation}\label{GML}
e_{n+1}=(y_{n+1}-\hat{y}_{n+1})+\hat{e}_{n+1}.
\end{equation}
Subtracting \eqref{CUpdateTDDIRK} from \eqref{UpdateTDDIRK} gives
\begin{equation}\label{GMinusL}
 y_{n+1}-\hat{y}_{n+1}=e_n+h[f(y_n)-f(\tilde{y}_n)]+h^2\sum_{i=1}^sb_i[g(Y_i)-g(\hat{Y}_i)].
\end{equation}
Therefore, using the Lipschitz property of $f$ and $g$, we can estimate
\begin{equation}\label{yMinusyHat}
\begin{aligned}
\|y_{n+1}-\hat{y}_{n+1}\|&\leq \|e_n\|+hL_f\| e_n\| +h^2\sum_{i=1}^s|b_i|L_g\|Y_i-\hat{Y}_i\|,\\
&=(1+h L_f)\|e_n\|+ h^2 L_g \sum_{i=1}^s|b_i|\|E_i\|,
\end{aligned}
\end{equation}
where $E_i$ given by
$$
E_i = Y_i-\hat{Y}_i = e_n+c_ih[f(y_n)-f(\tilde{y}_n)]+h^2\sum_{i=1}^i|a_{ij}|[g(Y_j)-g(\hat{Y}_j)]
$$
as the result of subtracting \eqref{CInternalTDDIRK} from \eqref{InternalTDDIRK}.
As in \eqref{yMinusyHat}, one can similarly show that
\begin{equation}\label{ErrorInternal}
\|E_i\|\leq (1+c_ihL_f)\|e_n\|+h^2L_g\sum_{i=1}^i|a_{ij}|\|E_j \|.
\end{equation}
Next, we shall prove by induction that, for $h<\dfrac{1}{\max_{1\leq j\leq i}\sqrt{L_g|a_{jj}|}}$, there exists a constant $C_i>0$ such that $\|E_i \|\leq C_i \|e_n\|$.
Indeed,  for  $i=1$, we have  from \eqref{ErrorInternal},
$$
\begin{array}{rl}
 \|E_1\|\leq (1+c_1hL_f)\|e_n\|+h^2 L_g |a_{11}|  \|E_1\|
\Leftrightarrow (1-h^2L_g |a_{11}|)\|E_1\|\leq (1+c_1hL_f)\|e_n\|.
\end{array}
$$
Therefore, it is true that if $1-h^2L_g |a_{11}| >0$, i.e.,  $h<\dfrac{1}{\sqrt{L_g |a_{11}|}}$, then
$$
\|E_1\|\leq C_1 \|e_n \|, \text{with} \  C_1 = \dfrac{1+c_1hL_f}{1-h^2 L_g |a_{11}|}.
$$
Now, suppose $\|E_j\|\leq C_j\| e_n\|$, for $j=1,2,\ldots,i$, we will show that $\|E_{i+1} \|\leq C_{i+1}\|e_n\|$.
By applying \eqref{ErrorInternal} and using the induction assumption, we obtain
\begin{subequations}\label{ConvergenceGE}
\begin{align}
\nonumber \|E_{i+1}\|&\leq (1+c_{i+1}hL_f)\|e_n\|+h^2L_g\sum_{j=1}^{i+1}|a_{i+1,j}\|E_j\|,\\
\nonumber &=(1+c_{i+1}hL_f)\|e_n\|+ h^2L_g\sum_{j=1}^{i}|a_{i+1,j}\|E_j\| +h^2 L_g |a_{i+1.i+1}| \|E_{i+1}\| ,\\
\nonumber &\leq \big(1+c_{i+1}hL_f +h^2L_g\sum_{j=1}^i|a_{i+1,j}| C_j \big)\|e_n \|+h^2L_g|a_{i+1.i+1}|\|E_{i+1}\|,
\end{align}
\end{subequations}
which is equivalent to
$$
\big(1-h^2 L_g |a_{i+1,i+1}|\big)|\|E_{i+1} \|\leq \big(1+c_{i+1}hL_f +h^2 L_g \sum_{j=1}^i |a_{i+1,j}| C_j \big)\|e_n \|.
$$
Therefore, it is clear that , for  $h<\dfrac{1}{\sqrt{L_g|a_{i+1,i+1}|}}$,  $\|E_{i+1} \|\leq C_{i+1}\| e_n\| $ with
\begin{equation}\label{Ci_formula}
C_{i+1} = \dfrac{1+c_{i+1}hL_f +h^2L_g\sum_{j=1}^i|a_{i+1,j}| C_j}{1-h^2 L_g |a_{i+1,i+1}|}  \rightarrow 1  \ \text{as} \ h   \rightarrow 0.
\end{equation}
From \eqref{GML} and \eqref{yMinusyHat}, we now can estimate
\begin{subequations}\label{ConvergenceGE}
\begin{align}
\nonumber \|e_{n+1} \|&\leq \|y_{n+1}-\hat{y}_{n+1} \|+\| \hat{e}_{n+1}\|,\\
\nonumber &\leq \big(1+hL_f+h^2L_g\sum_{i=1}^s|b_i|C_i\big)\|e_n \|+\|\hat{e}_{n+1}\|.
\end{align}
\end{subequations}

Denoting $M_s=L_f+hL_g\sum_{i=1}^s|b_i|C_i$ (Clearly, $\lim_{h\rightarrow0} M_s=L_f$) leads to
\begin{equation}\label{globalerror}
 \|e_{n+1}\|\leq (1+hM_s)\| e_n\|+\|\hat{e}_{n+1}\|\leq e^{hM_s}\|e_n \|+\| \hat{e}_{n+1}\|.
\end{equation}
Solving \eqref{globalerror} (using the fact that $e_0=0$) gives
\begin{equation}
 \|e_{n+1} \| \leq \sum_{j=0}^n(1+hM_s)^{n-j}\| \hat{e}_{j+1}\|
\leq \sum_{j=0}^ne^{(n-j)hM_s}\| \hat{e}_{j+1}\|
\leq  \sum_{j=0}^ne^{(t_{end}-t_0)M_s}\| \hat{e}_{j+1}\|.
\end{equation}
Since $\hat{e}_{j+1}$ is the local error of the method, it satisfies $\|\hat{e}_{j+1} \|=\mathcal{O}(h^{p+1})$ for methods of order $p$. This implies there exists $\gamma>0$ such that  $\|\hat{e}_{j+1} \| \leq \gamma  h^{p+1}$
Therefore,
\[
\|e_{n+1} \| \leq\sum_{j=0}^ne^{(t_{end}-t_0)M_s} \gamma  h^{p+1} =(n+1)he^{(t_{end}-t_0)M_s} \gamma h^p
 \leq (t_{end}-t_0)e^{(t_{end}-t_0)M_s} \gamma h^p.
\]
\qed
\end{proof}
\section{Construction of new TDDIRK schemes}\label{Construct}

In this section, we recall the concept of dispersion and dissipation errors (see, e.g., \cite{HouwenSommeijer1987})  for constructing high-order TDDIRK schemes with optimized phase and amplification accuracy. In particular, we derive a family of two-stage fourth-order, optimal two-stage fifth-order, and three-stage fifth-order TDDIRK schemes designed to minimize those errors and, when possible, reduce the principal local truncation error. 
\subsection{Dispersion and dissipation} \label{Sec_PhaseProperty}
Consider the oscillatory test problem
\begin{equation}\label{testeq1}
y'={\rm i}\omega y, \quad  \omega>0.
\end{equation}
A numerical one-step method applied to this test equation has the update
\begin{equation}\label{recursivey}
y_{n+1}=R( {\rm i} \nu)y_n, \quad \nu=\omega h,\quad
{\rm i}^2=-1,
\end{equation}
where $ R({\rm i}\nu)$  the is numerical amplification factor (stability function).
\begin{definition}[Dispersion and Dissipation] 
(\cite{HouwenSommeijer1987}) 
With the stability function $R({\rm i}\nu)$, the quantities
\begin{equation}\label{dispdis}
\Psi(\nu)=\nu-{\rm arg}(R({\rm i}\nu)) \ \text{and} \ \
\Phi(\nu)=1-|R({\rm i}\nu)|
\end{equation}
are called the dispersion (phase-lag) and the
dissipation (amplification error), respectively.
The scheme \eqref{TDDIRK} is dispersive of order $p$ and is dissipative of order $q$ if
$$
\Psi(\nu)=\Upsilon_\Psi \nu^{p+1}+\mathcal{O}(\nu^{p+3}), \quad \Phi(\nu)=\Upsilon_\Phi \nu^{q+1}+\mathcal{O}(\nu^{q+3}),
$$
respectively. Also, the constants $\Upsilon_\Psi$ and $\Upsilon_\Phi$ are called the phase-lag and dissipation constants, respectively. In the case $\Psi(\nu)= 0$ or $\Phi(\nu)=0$, it is called zero-dispersive or
zero-dissipative, respectively.
\end{definition}
The phase errors $\Psi(\nu)$ and $\Phi(\nu)$ accumulate during the numerical implementation and are a major source of inaccuracies when large integration steps are used.

Hence, in this work we optimize the coefficients of TDDIRK schemes to increase the dispersion and dissipation orders, or equivalently, to reduce the leading terms in the corresponding dispersion and dissipation errors.

Applying TDDIRK scheme  \eqref{TDDIRK} to  \eqref{testeq1}, one can find $ R({\rm i}\nu)$ as
\begin{equation}\label{Stabfun}
    R({\rm i} \nu)=\big(1-\nu^2 \bm{b}^T(I_s+\nu^2\bm{A})^{-1}\bm{e}) \big)+{\rm i} \big(\nu(1-\nu^2)\bm{b}^T(I_s+\nu^2\bm{A})^{-1}\bm{c}
    \big)
\end{equation}
(here, $I_s$ is the $s \times s$ identity matrix).

Next, we will derive coefficients of 4th and 5th-order TDDIRK schemes, use  \eqref{Stabfun} to obtain their dispersion and the
dissipation \eqref{dispdis}, and optimize them.

Clearly, with $s=1$, it is possible to derive scheme of order 3, however because there is no free parameter to allow for the optimization of the scheme, we start the construction with $s=2$.
\subsection{Optimized two--stage fourth-order schemes}\label{2Stage4OrderConstruct}
For a two-stage ($s=2$), we impose the sufficient conditions in Table \ref{TabOrderConditions} of section \ref{Sec_TDDIRK} for $s=2$.
A two-stage TDDIRK scheme has seven unknown parameters and should satisfy three conditions made up of No. 1--3 in Table \ref{TabOrderConditions} and two row assumptions \eqref{rowassumption} leaving out two free parameters.
Solving the order conditions No. 1--2 for $b_1$ and $b_2$, we obtain
\begin{equation}\label{BcoefficientOrd5}
b_1=\dfrac{1-3c_2}{6(c_1-c_2)}, \qquad b_2=\dfrac{3c_1-1}{6(c_1-c_2)}, \qquad c_1\neq c_2.
\end{equation}
With parameter $b_1$ and $b_2$, fourth order condition is satisfied if
\begin{equation}\label{Order4c1c2Cond}
  2(c_1+c_2-3c_1c_2)-1=0
\end{equation}
for $c_1\neq c_2$. With $c_1=\alpha$ and $a_{21}=\beta$ as a free parameters, the constraint \eqref{Order4c1c2Cond} and the simplifying assumptions \eqref{rowassumption} give the relationships
$$
  c_2=\dfrac{1-2\alpha}{2(1-3\alpha)}. \qquad  a_{11}=\dfrac{\alpha^2}{2} \qquad a_{22}=\dfrac{(1-2\alpha)^2}{8(1-3\alpha)^2}-\beta, \qquad \alpha\neq \dfrac{1}{3}.
$$
The family of two-stage scheme with parameters $\alpha$ and $\beta$ denoted as $\mathtt{TDDIRK4s2(\alpha,\beta)}$ in the Butcher's tableau is given by
\begin{equation}\label{TDDIRK4ButcherTableau}
\centering
\begin{tabular}{c|cc}
  $\alpha$ & $\dfrac{\alpha^2}{2}$ &\\[2ex]
  $\dfrac{1-2\alpha}{2(1-3\alpha)}$ & $\beta$ & $\dfrac{(1-2\alpha)^2}{8(1-3\alpha)^2}-\beta$\\[2ex]
  \hline
 &$\dfrac{1}{6-24\alpha+36\alpha^2}$ & $\dfrac{(1-3\alpha)^2}{3(1-4\alpha+6\alpha^2)}$.
\end{tabular}
\end{equation}
Using Definition 3.1, we now obtain the dispersion and the
dissipation of $\mathtt{TDDIRK4s2(\alpha,\beta)}$ 
\begin{subequations}\label{PhaseConstant}
\begin{align}
&\Psi(\mu)=\frac{3-40\beta(1-3\alpha)^2-4\alpha-10\alpha^2}{240(3\alpha-1)}\nu^5+ \frac{1}{13440(3\alpha-1)^3} D\nu^7 +\mathcal{O}(\nu^9) \label{phi2}\\[2ex]
&\Phi(\mu)=\dfrac{36\alpha^4-72\alpha^3-6\alpha^2+24\alpha-5+72\beta(1-3\alpha)^2(2\alpha^2-4\alpha+1)}{576(1-3\alpha)^2}\nu^6+\mathcal{O}(\nu^8),
\end{align}
\end{subequations}
where
\begin{equation}\label{C72}
\begin{aligned}
D &=
 (90720\beta+2520)\alpha^6-(181440\beta+3360)\alpha^5-2240\beta^2-1120\beta+103\\
& \quad -(181440\beta^2-35280\beta+1820)\alpha^4+(241920\beta^2+84000\beta+824)\alpha^3\\
& \quad -(120960\beta^2+56560\beta-1346)\alpha^2+(26880\beta^2+13440\beta-752)\alpha.
\end{aligned}
\end{equation}

\noindent Next, we seek parameters $\alpha$ and $\beta$ by minimizing the dispersion and dissipation errors such that numerical scheme has the following properties:

\subsubsection*{Property I: Dispersion order 6 and dissipation order 7}
To achieve this property, from \eqref{PhaseConstant}, we solve the system of algebraic equation
\begin{subequations}
\begin{align}
&\dfrac{3-40\beta(1-3\alpha)^2-4\alpha-10\alpha^2}{240(3\alpha-1)}=0 \label{Case1a}\\[2ex]
&\dfrac{36\alpha^4-72\alpha^3-6\alpha^2+24\alpha-5+72\beta(1-3\alpha)^2(2\alpha^2-4\alpha+1)}{576(1-3\alpha)^2}=0 \label{Case1b}
\end{align}
\end{subequations}

\noindent for $\alpha$ and $\beta$. Solving \eqref{Case1a} for $\beta$, we obtain
\begin{equation}\label{beta}
\beta=\dfrac{3-4\alpha-10\alpha^2}{40(1-3\alpha)^2}.
\end{equation}
Substituting $\beta$ in \eqref{Case1b}, $\alpha$ is easily obtained by solving the simplified equation
$$
1-9\alpha+12\alpha^2=0
$$
which yields
$$
\alpha=\dfrac{1}{24}(9\pm \sqrt{33}).
$$
For example, the choice of $\alpha=\dfrac{1}{24}(9- \sqrt{33})$ in \eqref{beta} yields
$$
\beta=\dfrac{23}{960}(1+\sqrt{33})
$$
with the smallest dispersion error constant $|\Upsilon_\Psi|=0.0000062727$. Hence, the two-stage fourth-order scheme can be represented with a Butcher's tableau
\begin{equation*}
\begin{tabular}{c|cc}
  $\dfrac{1}{24}(9-\sqrt{33})$ & $\dfrac{1}{192}(19-3\sqrt{33})$ &\\[2ex]
  $\dfrac{1}{24}(9+\sqrt{33})$ & $\dfrac{23}{960}(1+\sqrt{33})$ & $\dfrac{1}{120}(9-\sqrt{33})$\\[2ex]
  \hline
 &$\dfrac{1}{132}(33+\sqrt{33})$ & $\dfrac{1}{132}(33-\sqrt{33})$
\end{tabular}
\end{equation*}
and shall be denoted $\mathtt{OTDDIRK4s2a}$.

\subsubsection*{Property II. Dispersion order 8 and dissipation order 5}
Similarly to Case I, to obtain the above property, $\Psi(\nu)$ in \eqref{phi2} must vanish up to $\mathcal{O}(\nu^9)$. The elimination of $\beta$ in the resulting equations yields
\begin{equation}\label{Stage3AlphaCondition}
2-20\alpha+35\alpha^2-35\alpha^3=0.
\end{equation}
Solving \eqref{Stage3AlphaCondition} for $\alpha$ yields one real values which is substituted in \eqref{beta} to obtain
$$
\alpha=\dfrac{1}{3}-\dfrac{\left(34300+525\sqrt{6699} \right)^{2/3}-875}{105\left(34300+525\sqrt{6699} \right)^{1/3}},\qquad \beta=\dfrac{3-4\alpha-10\alpha^2}{40(1-3\alpha)^2}.
$$
The corresponding two-stage fourth-order scheme takes the Butcher's form
\begin{equation*}
\begin{array}{cc}
{\scriptsize
\begin{tabular}{c|cc}
  $\alpha$ & $\dfrac{\alpha^2}{2}$ &\\[2ex]
  $\dfrac{1-2\alpha}{2(1-3\alpha)}$ & $\beta$ & $\dfrac{(1-2\alpha)^2}{8(1-3\alpha)^2}-\beta$\\[2ex]
  \hline
 &$\dfrac{1}{6-24\alpha+36\alpha^2}$ & $\dfrac{(1-3\alpha)^2}{3(1-4\alpha+6\alpha^2)}$
\end{tabular},} & \begin{array}{l}
                   \alpha=\dfrac{1}{3}-\tfrac{\left(34300+525\sqrt{6699} \right)^{2/3}-875}{105\left(34300+525\sqrt{6699} \right)^{1/3}}, \\[2ex]
                   \beta=\dfrac{3-4\alpha-10\alpha^2}{40(1-3\alpha)^2}.
                 \end{array}
\end{array}
\end{equation*}
and shall be denoted $\mathtt{OTDDIRK4s2b}$.

\subsection{An optimal two-stage fifth-order scheme}

Here, we seek for a special 2-stage TDDIRK scheme of order 5. This requires that the coefficients of the numerical scheme satisfy the order conditions 1--5 presented in Table \ref{TabOrderConditions}.

As established in Subsection \ref{2Stage4OrderConstruct}, a two-stage scheme of order four has the Butcher tableau \eqref{TDDIRK4ButcherTableau} with parameter $\alpha$ and $\beta$. To extend this to order 5, we derive unique parameters $\alpha$ and $\beta$ such that numerical scheme additionally satisfies two extra order conditions No. 4 and 5 in Table \ref{TabOrderConditions}. This requirements leads to the following system of equations
\begin{subequations}
\begin{align}
&\dfrac{\alpha(2-48\beta)+8\beta+\alpha^2(2+72\beta)-1}{48(3\alpha-1)}=\dfrac{1}{120}, \label{Ord5a}\\
&\dfrac{2\alpha^2+2\alpha-1}{72\alpha-24}=\dfrac{1}{20} \label{Ord5b}.
\end{align}
\end{subequations}
Solving \eqref{Ord5b} for $\alpha$ and substituting into \eqref{Ord5a}, yields the parameters
$$
\alpha=\dfrac{1}{10}(4 - \sqrt{6}),\qquad  \beta=\dfrac{1}{50}(2+3\sqrt{6}),
$$
which defines a unique two-stage, fifth order scheme. This scheme has the smallest phase-lag constant, $|\Upsilon_\Psi|=0.000173639$ and represented in the Butcher's tableau
\begin{equation*}
\begin{tabular}{c|cc}
  $\dfrac{1}{10}(4-\sqrt{6})$ & $\dfrac{1}{100}(11-4\sqrt{6})$ &\\[2ex]
  $\dfrac{1}{10}(4+\sqrt{6})$ & $\dfrac{1}{50}(2+3\sqrt{6})$ & $\dfrac{1}{100}(7- 2\sqrt{6})$\\[2ex]
  \hline
  &$\dfrac{1}{36}(9+\sqrt{6})$ & $\dfrac{1}{36}(9-\sqrt{6})$
\end{tabular}.
\end{equation*}
We shall refer to this method as $\mathtt{TDDIRK5s2}$.

\subsection{Optimized three--stage fifth-order schemes}

For a 3--stage scheme of order 5, the order conditions for this
case ($s=3$) must satisfy order conditions 1--5 in Table \ref{TabOrderConditions}.
Given that the three-stage TDDIRK scheme has 12 coefficients, applying five order conditions together with the three row assumptions \eqref{rowassumption} leaves four free parameter. For simplicity, we set $c_1=0$, so that the resulting TDDIRK scheme has an explicit first stage.

The weights are determined by solving order conditions 1--3 in Table \ref{TabOrderConditions} to obtain
$$
b_1=\dfrac{1-2(c_2+c_3-3c_2c_3)}{12c_2c_3}, \qquad b_2=\dfrac{1-2c_3}{12c_2(c_2-c_3)}, \qquad b_3=\dfrac{2c_2-1}{12c_3(c_2-c_3)}.
$$
The corresponding row assumptions for a 3-stage scheme gives the relationships
$$
a_{11}=0,\qquad a_{22}=\dfrac{c_2^2}{2}-a_{21}, \qquad a_{33}=\dfrac{c_3^2}{2}-a_{31}-a_{32}.
$$
Using the weights $b_i$ and consequences of the row assumptions,
we solve for $c_3$ and $a_{32}$ using order conditions 4--5 in Table \ref{TabOrderConditions}, to obtain
$$
c_3=\dfrac{3-5c_2}{5(1-2c_2)}, \qquad
a_{32}=\dfrac{2-240b_2a_{22}c_2-120b_3c_3^3+240b_3a_{31}c_3}{240b_3(c_2-c_3)}.
$$
Thus, the coefficients $a_{21}$, $a_{31}$ and $c_2$ can be chosen such that the scheme is optimized by minimizing the dispersion, dissipation errors as well as the local error $\|\hat{e}_{n+1}\|$.\\

\noindent The Taylor expansion of quantities $\Psi(\nu)$ and $\Phi(\nu)$ are given by
\begin{subequations}\label{PsiPhi3}
\begin{align}
&\Psi(\nu)=\dfrac{E}{420(2c_2-1)^2(3-10c_2+10c_2^2)}\nu^7+\mathcal{O}(\nu^{9}), \label{Psi33} \\
&\Phi(\nu)=\dfrac{F}{6(2c_2-1)(3-10c_2+10c_2^2)}\nu^6+ \mathcal{O}(\nu^{8}), \label{Phi3}
\end{align}
\end{subequations}
where
\begin{equation}\label{E}
\begin{aligned}
E &=
 25 (280 a_{31}-61) c_2 - 1000 (371 a_{31}-33) c_2^4
+(7995 - 59500 a_{31}) c_2^2 \\
& \quad + 50 (4130 a_{31}-431) c_2^3
+14000 (25 a_{31}-2) c_2^5 +3000a_{31}c_2^2-220c_2^2
 \\
& \quad + 70 a_{21} (9 + (20 - 500 a_{31}) c_2
+(40 a_{31}-3)(100c_2^4-150c_2^3-3500c_2^6))\\
& \quad +1400 a_{21}^2 (5 c_2-3)+102 \hspace{2in}
\end{aligned}
\end{equation}
and
\begin{equation}\label{F}
\begin{aligned}
F &=
12 + 90 a_{21} - 145 c_2 - 150 a_{21} c_2 + 750 a_{31} c_2 + 525 c_2^2 + 4500 a_{31} c_2^2 - 800 c_2^3 \\
& \quad  + 9000 a_{31} c_2^3 + 450 c_2^4 - 6000 a_{31} c_2^4. \hspace{2in}
\end{aligned}
\end{equation}

\noindent We optimize the scheme by allowing $E$ and $F$ to vanish in \eqref{PsiPhi3}, thereby obtaining a numerical scheme with dispersion order 8 and dissipation order 7, respectively. The quantities $E$ and $F$ vanish for
$$
a_{21}=\dfrac{1}{70}(2-11c_2+35c_2^2), \qquad a_{31}=\dfrac{1575c_2^3-2275c_2^2+940c_2-102}{5250c_2(1-2c_2)^2}.
$$
The parameter $c_2$ shall be obtained by a numerical search for $c_2$ where $\|\hat{e}_{n+1}\|$ is minimum. Specifically, $\|\hat{e}_{n+1}\|$ for a fifth-order scheme is derived from order six conditions 6--8 in Table \ref{TabOrderConditions}. Substituting the derived parameters, $\|\hat{e}_{n+1}\|$, depending on $c_2$, is given by
\begin{equation}\label{LTE3}
\|\hat{e}_{n+1} \|=\frac{122 - 2055 c_2 + 12930 c_2^2 - 40225 c_2^3 + 66150 c_2^4 - 55125 c_2^5 +
 18375 c_2^6}{3969000000 (2 c_2-1)^3}.
\end{equation}
Following \cite{SofroniouSpaletta2004}, a necessary condition for a minimum can be derived by setting the derivative to zero, that is
$$
\dfrac{d\|\hat{e}_{n+1}  \|}{dc_2}=\dfrac{(1-5c_2+5c_2^2)^2(3-10c_2+10c_2^2)}{9000000(1-2c_2)^4}=0.
$$
Solving the resulting equations for parameter $c_2$, the real solutions are given by
$$
c_2^{\text{*}}=\dfrac{1}{10}(5\pm \sqrt{5}).
$$
Further more, since $\dfrac{d^2}{dc_2^2}\| \hat{e}_{n+1}\|(c_2^{\text{*}})=0$, this implies an inflection at $c_2^{\text{*}}$. The choice $c_2=\dfrac{1}{10}(5-\sqrt{5})$ yields the smallest phase-lag constant $|\Upsilon_\Psi|=0.000000449669$ and the corresponding scheme in Butcher tableau is
\begin{equation*}
\begin{tabular}{c|ccc}
  0 & 0  &  &\\[2ex]
  $\dfrac{1}{10}(5-\sqrt{5})$ & $\dfrac{1}{10}-\dfrac{6}{175}\sqrt{5}$ & $\dfrac{1}{20}-\dfrac{11}{700}\sqrt{5}$  &
  \\[2ex]
  $\dfrac{1}{10}(5+\sqrt{5})$ & $\dfrac{20+19\sqrt{5}}{1050}$ & $\dfrac{17}{1050}(5+3\sqrt{5})$  &  $\dfrac{1}{60}(3-\sqrt{5})$
  \\[2ex]
  \hline
& $\dfrac{1}{12}$ & $\dfrac{1}{24}(5+\sqrt{5})$ & $\dfrac{5}{6(5+\sqrt{5})}$
\end{tabular}.
\end{equation*}
This scheme shall be denoted as $\mathtt{OTDDIRK5s3}$.

Putting altogether, we now summarize in Table \ref{TablePhaseProperty}, the dispersion and dissipation errors for the newly derived TDDIRK schemes with optimized phase errors.
\begin{table}[ht!]
\begin{center}
\begin{minipage}{174pt}
\caption{Dispersion and dissipation of the optimized TDDIRK schemes}\label{TablePhaseProperty}\label{TablePhaseProperty}%
\begin{tabular}{ll}
\hline\noalign{\smallskip}
Method & Dispersion ($|\Psi(\nu)|$)\\
       & Dissipation ($|\Phi(\nu)|$)\\
\hline\noalign{\smallskip}
$\mathtt{OTDDIRK4s2a}$ & $0.0000062727 \nu^7 +\mathcal{O}(\nu^9)$\\
              & $0.0000474716 \nu^8 +\mathcal{O}(\nu^{10})$\\[2ex] $\mathtt{OTDDIRK4s2b}$ & $0.00001112846 \nu^9 + \mathcal{O}(\nu^{11})$\\
              & $0.00007992350 \nu^6 +\mathcal{O}(\nu^8)$\\[2ex]
$\mathtt{TDDIRK5s2}$ & $0.000173639 \nu^7 + \mathcal{O}(\nu^{9})$\\
              & $0.000138889 \nu^6 +\mathcal{O}(\nu^8)$\\[2ex]
$\mathtt{OTDDIRK5s3}$  & $0.000000449669 \nu^9+\mathcal{O}(\nu^{11})$ \\
                       & $0.000000563909\nu^8+\mathcal{O}(\nu^{10})$\\
\hline\noalign{\smallskip}
\end{tabular}
\end{minipage}
\end{center}
\end{table}
\section{Stability regions of the newly derived TDDIRK schemes}\label{SectionStability}
This section is concerned with the linear stability analysis
of the TDDIRK methods derived in Section \ref{Construct}. The linear stability is analyzed by applying the method \eqref{TDDIRK} to the scalar test equation
\begin{equation}\label{testeq}y'=\lambda y, \quad
\quad  \lambda>0.
\end{equation}
This results in the following difference equation
$$
y_{n+1}=R(z)y_n, \qquad z=\lambda h.
$$
The stability function (or growth function) is given by
\begin{equation}
\dfrac{y_{n+1}}{y_n}=R(z)=1+z+z^2 \bm{b} \cdot \big( (I-z^2\bm{A})^{-1}(\bm{e}+ \bm{c}z)\big).
\end{equation}
where $I$ is the $s\times s$ identity matrix. The method \eqref{TDDIRK} is absolutely stable if $|R(z)|\leq 1$.

Figure \ref{FigStabTDDIRK} presents the stability plots of TDDIRK schemes derived in Section \ref{Construct} on the plane specified by $[-6,0.3] \times [-6,6]$ which includes the stability region of the newly derived schemes $\mathtt{OTDDIRK4s2a}$, $\mathtt{OTDDIRK4s2b}$, $\mathtt{TDDIRK5s2}$, $\mathtt{OTDDIRK5s3}$ compared with existing TDDIRK schemes ($\mathtt{TDDIRK4s2}$, $\mathtt{TDDIRK5s3}$) presented in \cite{AhmadSenuIbrahimOthman2019a}. It can be observed that the newly optimized TDDIRK schemes possess larger stability region than the existing TDDIRK schemes presented in \cite{AhmadSenuIbrahimOthman2019a}.
\begin{figure}[ht!]
\centering
\includegraphics[width=12cm,height=6.5cm]{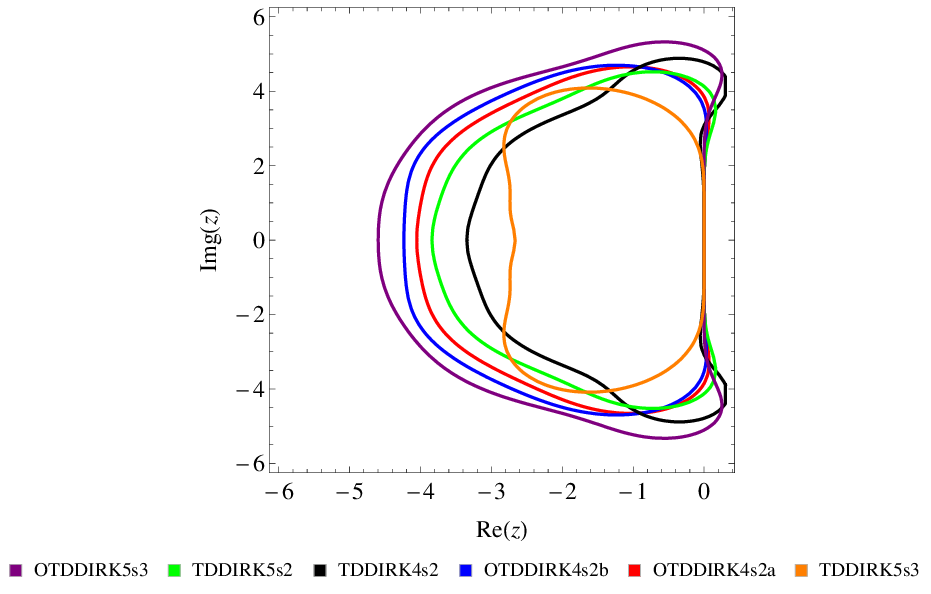}
\caption{Stability plots for TDDIRK schemes.}\label{FigStabTDDIRK}
\end{figure}

\section{Numerical experiments}\label{Sec_experiments}

In order to assess the effectiveness of the newly constructed TDDIRK schemes derived in Section \ref{Construct}, we run numerical tests to problems, such as 2D harmonic oscillator \cite{Kalogiratou2013}, an advection equation with a nonlinear source term \cite{GriffithsStuartYee1992} and a 2D advection-diffusion-reaction problem \cite{Luan2013}. We present here the performance in terms of the accuracy and CPU time of the schemes when compared to existing optimized DIRK and TDDIRK schemes of the same orders in the literature.

For accuracy comparison, same step sizes will be used for all the considered schemes. For efficiency comparison, the step sizes will be chosen so that these schemes achieve about the same error thresholds. The errors at the final time are computed using the maximum norm ($\log_{10}(MGE)$). The internal stages of the TDDIRK schemes are sequentially computed by using the  fixed point iteration technique and the stoping criterion for the iterative procedure is given by
\begin{equation*}
\|Y_i^{(r+1)}-Y_i^{(r)} \|_2 <{10}^{-12}, \qquad r=0,1,2,\ldots,
\end{equation*}
where $Y_i^{(r)}$, is the value at the $r$th iteration in the iterative process.

The new fourth-order schemes ($\texttt{OTDDIRK4s2a}$ and $\texttt{OTDDIRK4s2b}$) are compared with: the 3-stage DIRK schemes ($\mathtt{ILDDRK4}$) \cite{NajafiYazdiMongeau2013}, 4-stage DIRK schemes ($\mathtt{LDDDIRK4}$) \cite{NazariMohammadianCharron2015}, a 4-stage singly DIRK scheme ($\mathtt{SDIRK4s4}$) \cite{BoomZingg2018}, 7-stage ESDIRK scheme \\($\mathtt{ESDIRK4s7}$) \cite{KennedyCarpenter2019}, 2-stage TDDIRK scheme ($\mathtt{TDDIRK4s2}$) \cite{AhmadSenuIbrahimOthman2019a}, respectively.

Further more, the new fifth order schemes ($\mathtt{TDDIRK5s2}$ and $\mathtt{OTDDIRK5s3}$) are similarly compared with: the 5-stage EDIRK scheme ($\mathtt{EDIRK5s5}$) \cite{Kraaijevanger1991}, 4-stage EDIRK scheme ($\mathtt{EDIRK5s4}$) \cite{AlRabeh1993}, 5-stage SDIRK scheme ($\mathtt{SDIRK5s5}$) \cite{HairerWanner1996}, 5-stage ESDIRK scheme ($\mathtt{ESDIRK5s5}$) \cite{SkvortsovKozlov2014}, 7-stage ESDIRK scheme ($\mathtt{ESDIRK5s7}$) \cite{KennedyCarpenter2019} and the 3-stage TDDIRK scheme ($\mathtt{TDDIRK5s3}$) \cite{AhmadSenuIbrahimOthman2019a}, respectively.
\subsection{The 2D harmonic oscillator}\label{Harmonic}
\noindent First, we consider the two dimensional harmonic oscillator  \cite{Kalogiratou2013} described by the Hamiltonian
$$
H(p_1,p_2,q_1,q_2)=\dfrac{1}{2}(p_1^2+P_2^2)+\dfrac{1}{2}(q_1^2+q_2^2).
$$
The equations of motion are
$$
p'_1=-q_1, \qquad q'_1=p_1, \qquad p'_2=-q_2, \qquad q'_2=p_2
$$
with initial conditions
$$
p_1(0)=0\qquad q_1(0)=1, \qquad p_2(0)=1, \qquad q_2(0)=0.
$$
The exact solution of this problem is $q_1(x)=\cos x$ and $q_2(x)=\sin x$.

This problem was integrated on the interval $[0,100]$ with time steps $h=\{\tfrac{1}{4},\tfrac{1}{8},\tfrac{1}{16},\tfrac{1}{32}\}$. The numerical behaviour of the global error norm are presented in Figures \ref{Fig_Harmonic_4} and \ref{Fig_Harmonic_5}.
\begin{figure}
\centering
\begin{tabular}{cc}
\epsfig{file=Prob_Accuracy_Harmonic_4.eps,width=0.47\linewidth,clip=}
\hspace{2mm}
\epsfig{file=Prob_Efficiency_Harmonic_4.eps,width=0.47\linewidth,clip=}
\end{tabular}
\caption{Accuracy (left) and efficiency (right) plots of 4th-order schemes for Example~5.1.}\label{Fig_Harmonic_4}
\end{figure}
\begin{figure}
\centering
\begin{tabular}{cc}
\epsfig{file=Prob_Accuracy_Harmonic_5.eps,width=0.47\linewidth,clip=}
\hspace{2mm}
\epsfig{file=Prob_Efficiency_Harmonic_5.eps,width=0.47\linewidth,clip=}
\end{tabular}
\caption{Accuracy (left) and efficiency (right) plots of 5th-order schemes for Example~5.1.}\label{Fig_Harmonic_5}
\end{figure}
\subsection{Advection with a nonlinear source term}\label{Advection}

Next, we consider the stiff nonlinear advection equation  
\begin{equation}
u_t+u_x=f(u)
\end{equation}
studied in \cite{GriffithsStuartYee1992}, where function $f(u)$ is assumed to satisfy the following conditions:
\renewcommand{\theenumi}{\roman{enumi}}%
\begin{enumerate}
  \item $f(u)\in \mathcal{C}^2([0,1],R)$;
  \item $f(0)=f(1)=0$ and $f(u)>0$ for $u\in(0,1)$;
  \item $f'(0)>0$, $f'(1)<0$
\end{enumerate}
A typical example given satisfying i--iii is $f(u)=u-u^2$. Such problems arise as models of nonequilibrium gas dynamics and, in particular, in the study of transatmospheric vehicles. We consider the problem with a wave of amplitude of $u(t=0,2<x<4)$, imposed as the initial condition.

To discretize the space variables, we consider the upwind scheme with a fine mesh of $\Delta x=0.05$. We compute and compare the errors of numerical solution of the resulting system of ODEs in order to attain four CFL numbers of 0.05, 0.1, 0.2 and 0.4. Figures \ref{Fig_Advection_4} and \ref{Fig_Advection_5} show the maximum of the absolute errors over the integration interval measured in the $l_\infty$ norm at $t=1.4$ for the considered schemes and the efficiency in terms of CPU time, respectively.  Table \ref{TabProb52_4} and \ref{TabProb52_5} present the maximum global error for schemes of order 4 and 5, respectively. The implementation is carried out with time step $h=0.02$ and different meshsize using the uniform grid ($\Delta x=2/N$), $N=50$, 100 and 200 for so that the computation satisfy the CFL condition of 0.5, 1.00 and 2.00 in order to investigate the spatial grid effect.
\begin{figure}[ht!]
\centering
\begin{tabular}{cc}
\epsfig{file=Prob_Accuracy_Advection_4.eps,width=0.47\linewidth,clip=}
\hspace{2mm}
\epsfig{file=Prob_Efficiency_Advection_4.eps,width=0.47\linewidth,clip=}
\end{tabular}
\caption{Accuracy (left) and efficiency (right) plots of 4th-order schemes for Example~5.2.}\label{Fig_Advection_4}
\end{figure}
\begin{figure}[ht!]
\centering
\begin{tabular}{cc}
\epsfig{file=Prob_Accuracy_Advection_5.eps,width=0.47\linewidth,clip=}
\hspace{2mm}
\epsfig{file=Prob_Efficiency_Advection_5.eps,width=0.47\linewidth,clip=}
\end{tabular}
\caption{Accuracy (left) and efficiency (right) plots of 5th-order schemes for Example~5.2.}\label{Fig_Advection_5}
\end{figure}
\begin{table}
  \begin{center}
  \begin{minipage}{174pt}
  \caption{Maximum global error for fourth-order schemes when applied to Example~5.2}\label{TabProb52_4}
  \begin{tabular}{lccc}
\hline\noalign{\smallskip}
      Method & $N=50$ & $N=100$ & $N=200$ \\
\hline\noalign{\smallskip}
     ILDDRK4 & $8.70 \cdot 10^{-3}$ & $6.11 \cdot 10^{-2}$ & $2.95 \cdot 10^{-1}$  \\
     LDDDIRK4 & $1.87 \cdot 10^{-2}$ & $2.73 \cdot 10^{-1}$ & NaN  \\
     SDIRK4s4 & $8.10 \cdot 10^{-3}$ & $1.26 \cdot 10^{-1}$ & NaN \\
     ESDIRK4s7 & $6.00\cdot 10^{-5}$ & $1.40\cdot 10^{-3}$ & $2.96\cdot 10^{-2}$  \\
     TDDIRK4s2 & $4.00\cdot 10^{-4}$ & $1.00\cdot 10^{-2}$ & $3.88\cdot 10^{-1}$  \\
     OTDDIRK4s2a & $7.87\cdot 10^{-6}$ & $3.89\cdot 10^{-4}$ & $2.97\cdot 10^{-2}$  \\
     OTDDIRK4s2b & $1.74\cdot 10^{-5}$ & $6.85\cdot 10^{-4}$ & $3.41\cdot 10^{-2}$  \\
\hline\noalign{\smallskip}
   \end{tabular}
    \end{minipage}
  \end{center}
\end{table}
\begin{table}
  \begin{center}
  \begin{minipage}{174pt}
  \caption{Maximum global error for fifth-order schemes when applied to Example~5.2}\label{TabProb52_5}
  \begin{tabular}{lccc}
\hline\noalign{\smallskip}
        Method & $N=50$ & $N=100$ & $N=200$  \\
\hline\noalign{\smallskip}
     EDIRK5s5 & $9.51\cdot 10^{-6}$ & $4.69\cdot 10^{-4}$ & $1.95\cdot 10^{-2}$  \\
       EDIRK5s4 & $1.20\cdot 10^{-3}$ & $4.00\cdot 10^{-3}$ & $2.34\cdot 10^{-2}$ \\
     SDIRK5s5 & $2.09 \cdot 10^{-4}$ & $8.30 \cdot 10^{-3}$ & $1.61 \cdot 10^{-1}$  \\
     ESDIRK5s5 & $3.61\cdot 10^{-4}$ & $8.50\cdot 10^{-3}$ & $1.56\cdot 10^{-1}$ \\
     ESDIRK5s7 & $7.61\cdot 10^{-6}$ & $3.45\cdot 10^{-4}$ & $1.29\cdot 10^{-2}$  \\
     TDDIRK5s3 & $5.44\cdot 10^{-5}$ & $2.80\cdot 10^{-3}$ & $2.16\cdot 10^{-1}$  \\
     TDDIRK5s2 & $2.10\cdot 10^{-5}$ & $1.00\cdot 10^{-3}$ & $4.68\cdot 10^{-2}$ \\
     OTDDIRK5s3 & $1.86\cdot 10^{-7}$ & $3.20\cdot 10^{-5}$ & $5.50\cdot 10^{-3}$ \\
\hline\noalign{\smallskip}
   \end{tabular}
  \end{minipage}
  \end{center}
\end{table}
\subsection{2D advection-diffusion-reaction equation }\label{AdvectionDiffusion2D}

Finally, we consider the two-dimensional advection-diffusion-reaction equation (see \cite{Luan2013})
$$
\partial_t u=\epsilon \Delta u-\alpha (\nabla \cdot u)+\gamma u (u-\frac{1}{2})(1-u)
$$
with initial values
$$
u(x,y,0)=0.3+256\left(x(1-x)y(1-y)\right)^2
$$
on the unit square boundary $\Omega=[0,1]^2$, subject to the homogeneous Neumann boundary conditions with parameters $\epsilon=1/100$, $\alpha=-10$, $\gamma=100$. The solution space is discretized using the standard finite difference using 101 grid points in each direction with meshsize $\Delta x=\Delta y=1/20$.

We integrate the resulting system of ODEs and compare numerical results at $t=0.08$. The expected order plots and efficiency curves are displayed in Figures \ref{Fig_Advection_Diffusion_4} and \ref{Fig_Advection_Diffusion_5}.
\begin{figure}[ht!]
\centering
\begin{tabular}{cc}
\epsfig{file=Prob_Accuracy_Advection_Diffusion_Reaction_4.eps,width=0.47\linewidth,clip=}
\hspace{2mm}
\epsfig{file=Prob_Efficiency_Advection_Diffusion_Reaction_4.eps,width=0.47\linewidth,clip=}
\end{tabular}
\caption{Accuracy (left) and efficiency (right) plots of 4th-order schemes for Example~5.4.}\label{Fig_Advection_Diffusion_4}
\end{figure}
\begin{figure}[ht!]
\centering
\begin{tabular}{cc}
\epsfig{file=Prob_Accuracy_Advection_Diffusion_Reaction_5,width=0.47\linewidth,clip=}
\hspace{2mm}
\epsfig{file=Prob_Efficiency_Advection_Diffusion_Reaction_5.eps,width=0.47\linewidth,clip=}
\end{tabular}
\caption{Accuracy (left) and efficiency (right) plots of 5th-order schemes for Example~5.4.}\label{Fig_Advection_Diffusion_5}
\end{figure}

The numerical test problems \ref{Harmonic} and  \ref{Advection} fall within the class of oscillatory systems, while problem \ref{AdvectionDiffusion2D} represents a highly stiff case.

For oscillatory Problems \ref{Harmonic}--\ref{Advection}, the optimized fourth-order TDDIRK schemes are consistently superior to the reference schemes, both in terms of accuracy and efficiency for all the problems. Among the fifth-order schemes, $\mathtt{OTDDIRK5s3}$ is the most effective. In contrast, this is not true for the two-stage fifth-order scheme, $\mathtt{TDDIRK5s2}$. Observe from Figures \ref{Fig_Harmonic_5} and \ref{Fig_Advection_5} that the EDIRK5s5 and ESDIRK5s7 schemes achieves a higher accuracy than $\mathtt{TDDIRK5s2}$. This may be due to the fact that it is not optimized. Nevertheless, aside from $\mathtt{OTDDIRK5s3}$, the scheme $\mathtt{TDDIRK5s2}$ is most efficient among the fifth-order schemes considered. Above all, $\mathtt{OTDDIRK5s3}$ is the most accurate and efficient fifth-order scheme overall. Furthermore, for very small time steps, $\mathtt{OTDDIRK5s3}$ yields an error threshold of about $10^{-12}$ highlighting its stability and accuracy at high resolutions.

Turning to the stiff Problem \ref{AdvectionDiffusion2D}, Figure \ref{Fig_Advection_Diffusion_4} shows that optimized fourth-order TDDIRK scheme $\mathtt{OTDDIRK4s2b}$ is slightly more accurate than the $\mathtt{ESDIRK4s7}$. Although both methods perform similarly in terms of accuracy, $\mathtt{OTDDIRK4s2b}$ is notably more efficient. However, we observe that the $\mathtt{OTDDIRK4s2a}$ scheme clearly dominates all fourth-order schemes under comparison in terms of accuracy and efficiency.

For the fifth-order schemes applied to the stiff problem, Figure \ref{Fig_Advection_Diffusion_5} shows that the $\mathtt{TDDIRK5s2}$ achieves a comparable accuracy with $\mathtt{EDIRK5s5}$ and $\mathtt{ESDIRK5s7}$. This contrasts with the oscillatory test problems, where $\mathtt{EDIRK5s5}$ and $\mathtt{ESDIRK5s7}$ schemes are clearly superior. Most importantly, $\mathtt{OTDDIRK5s3}$ is the most efficient schemes in this context as well.

In summary, for all the problems considered, the fourth-order scheme $\mathtt{OTDDIRK4s2a}$ is more accurate and efficient than $\mathtt{OTDDIRK4s2b}$. For the fifth-order schemes, the optimized $\mathtt{OTDDIRK5s3}$ demonstrates superior accuracy compared with $\mathtt{TDDIRK5s2}$, while both schemes exhibit comparable efficiency.

\section{Conclusion}\label{conclu}

%

In this paper, we developed and analyzed new families of two-derivative diagonally implicit Runge--Kutta (TDDIRK) methods with optimized phase accuracy. After restating the order conditions and establishing  convergence theory, we constructed 2-stage fourth-and fifth-order, and 3-stage fifth-order schemes by systematically tuning free parameters to minimize phase errors and reduce principal local truncation error. A linear stability analysis confirmed their robustness, while a detailed study of phase properties demonstrated their superior accuracy compared with existing DIRK and TDDIRK schemes. Numerical experiments on oscillatory ODEs and semi-discrete PDEs further validated the effectiveness of the proposed schemes, showing clear advantages for long-time integration of highly oscillatory solutions and smooth hyperbolic problems.

This work shows the potential of two-derivative implicit integrators in efficiently handling oscillatory and stiff systems. Future research will focus on extending the framework to parallel TDDIRK schemes and investigating symmetric two-derivative methods for applications in nonlinear wave propagation, including elastic and acoustic models.

\section*{Acknowledgements}
Vu Thai Luan would like to thank the Vietnam Institute for Advanced Study in Mathematics (VIASM) for their hospitality during the summer research stay, where part of this work was carried out.

\bibliographystyle{spmpsci}      
\bibliography{Opt2DIRK1}   


\end{document}